\documentclass[11pt]{article}

\usepackage[margin=1in]{geometry}
\usepackage{amsmath,amssymb,amsfonts,amsthm,mathtools}
\usepackage{booktabs,array,multirow,graphicx}
\usepackage[authoryear,round]{natbib}
\usepackage[colorlinks=true,citecolor=blue,linkcolor=blue,urlcolor=blue]{hyperref}
\usepackage{enumitem}
\usepackage{placeins}
\usepackage{microtype}
\usepackage{soul}
\usepackage[table]{xcolor}
\newtheorem{theorem}{Theorem}[section]
\newtheorem{lemma}[theorem]{Lemma}
\newtheorem{proposition}[theorem]{Proposition}

\newtheorem{assumption}[theorem]{Assumption}
\theoremstyle{definition}

\newcommand{\R}{\mathbb{R}}
\newcommand{\E}{\mathbb{E}}
\newcommand{\Prob}{\mathbb{P}}
\newcommand{\Sph}{\mathbb{S}}
\newcommand{\ind}{\mathbf{1}}
\newcommand{\bfX}{\mathbf{X}}

\newcommand{\calA}{\mathcal{A}}
\newcommand{\calM}{\mathcal{M}}

\newcommand{\calX}{\mathcal{X}}
\newcommand{\calY}{\mathcal{Y}}

\newcommand{\iid}{\stackrel{\mathrm{iid}}{\sim}}
\newcommand{\dto}{\xrightarrow{d}}

\newcommand{\sgn}{\operatorname{sgn}}

\newcommand{\var}{\operatorname{Var}}

\newcommand{\BCodif}{\operatorname{BCodif}}
\newcommand{\BCodifCor}{\operatorname{BCodifCor}}
\newcommand{\rank}{\operatorname{rank}}

\title{Ball-Codifference Screening for Heavy-Tailed Predictors}

\author{
Mohsen Rezapour\hspace{.5cm} and  \hspace{.5cm}
Vahed Maroufy\thanks{Corresponding author:
Department of Biostatistics, School of Public Health,
The University of Texas Health Science Center at Houston (UTHealth Austin),
Texas, USA.
Email: \texttt{Vahed.Maroufy@uth.tmc.edu}}
}
\date{}

\begin{document}
\maketitle

\begin{abstract}
High-dimensional screening is commonly built on covariance, correlation, or least-squares measures.  These summary measures can be unstable or even undefined, when predictors are sparse or have heavy-tailed distributions. Building on our recent work on extended codifference 
%\citep{maroufy2025measure}
and the idea of Ball-covariance, we develop Ball-codifference for marginal screening in statistical modeling with heavy-tailed predictors and responses.  The proposed statistic combines the rank-type geometry of random balls with the codifference as a dependency measure constructed based on the characteristic function, so it can be computed without requiring well-defined finite first or second moments.  
%\hl{We review codifference for symmetric stable laws, state the independence property of an extended codifference for $1<\alpha<2$}, 
We define Ball-codifference and its normalized screening utility, and formulate a sure independence screening procedure.  Large-sample normality follows from a bounded V-statistic and functional-delta-method argument under standard nondegeneracy and regularity conditions.  Simulation studies under Gaussian and sub-Gaussian stable designs show that codifference-weighted Ball screening gives competitive or improved recovery of highly associated predictors, especially when tail heaviness is pronounced. Also, our data example illustrates that our variable screening method significantly improves prediction accuracy in linear regression. Reproducible R code is supplied and does not require the \texttt{Ball} package.
\end{abstract}

\noindent\textbf{Keywords:} Ball covariance; codifference; heavy-tailed data; stable distributions; sure independence screening; V-statistics.\\
\textbf{MSC 2020:} Primary 62H20; secondary 62G10, 62H12.

\section{Introduction}\label{sec:intro}

Dimensionality reduction and feature screening are central components of computational statistical analysis and prediction using high-dimensional data.  In ultra-high-dimensional applications, it is often infeasible to fit a full multivariable model before reducing the candidate set of predictors.  A common first step is therefore to rank predictors by their marginal association with the response and retain the variables with higher association.  Common methods for variable screening are sure independence screening (SIS) \citep{Fan:Lv}, distance-correlation screening \citep{Li:Zhong:Zhu:2012}, and Ball-correlation screening \citep{Pan:2019}.

Heavy-tailed data can not be analyzed using standard statistical methods and require measures and models that account for the tail characteristics.  For instance, for an \(\alpha\)-stable variable in a dataset, variance (if \( \alpha<2\)), even mean (if \(\alpha\leq 1\)), are not defined; hence, covariance-based screening, principal components, and least-squares criteria may then be unstable \citep{maroufy2025measure}.  Stable laws are nevertheless natural models for financial returns, environmental extremes, signal processing, and genomic measurements, where rare but large observations carry substantial information.  A screening statistic for these settings should avoid moment assumptions while retaining the ability to detect general dependence.

This paper introduces a codifference-weighted Ball screening statistic.  Unlike covariance, codifference is a dependence measure defined using the characteristic function; hence, it remains well defined for stable variables without finite variance \citep{Kokoszka:Taqqu:1993,Samorodnitsky:Taqqu:1994}.  Ball-covariance, on the other hand, measures the discrepancy between a joint distribution and the product of its marginals through random closed balls and is naturally adapted to metric spaces \citep{Pan:2019}.  The proposed Ball-codifference statistic combines these two ideas: the Ball component supplies a model-free geometric comparison, while a local codifference weight emphasizes heavy-tail-compatible dependence within random balls.

Our contributions in this are multifold.  First, we introduce and characterize the novel measure of Ball-codifference and its normalized screening utility.  Second, we exploit Ball-codifierence to design a novel sure SIS (BCodifCor-SIS) procedure for ultra-high-dimensional predictors.  Third, we state large-sample normality under explicit regularity assumptions and preserve the proof strategy based on V-statistics and quasi-Hadamard differentiability.  Fourth, we provide an R package to reproduce the two numerical tables. % Unlike the original implementation, the supplied code does not load the \texttt{Ball} package and instead compiles a small self-contained routine from within R.

The paper is organized as follows.  Section~\ref{sec:codiff} reviews stable distributions and codifference.  Section~\ref{sec:bcodif} introduces the novel Ball-codifference measure and develops the BCodifCor-SIS screening method for ultra-high-dimensional data.  Section~\ref{sec:asymptotic} states the large-sample theory and the sure-screening result.  Section~\ref{sec:numerical} reports the simulation study.  Section~\ref{sec:discussion} gives concluding remarks.  Proofs and technical details are collected in the Appendix.

\section{Codifference for stable random variables}\label{sec:codiff}

A $d$-dimensional random vector \(\bfX=(X_1,\ldots,X_d)\) is symmetric \(\alpha\)-stable, written \(\bfX\in S\alpha S_d\), if there exists a unique finite symmetric measure \(\Gamma\) on the unit sphere \(\Sph_d=\{s\in\R^d:\|s\|=1\}\) such that
\begin{equation}\label{eq:stable_cf}
\E\exp\{i\langle \theta,\bfX\rangle\}
=\exp\left\{-\int_{\Sph_d}|\langle \theta,s\rangle|^\alpha\,\Gamma(ds)\right\},
\qquad \theta\in\R^d.
\end{equation}
The Gaussian distribution corresponds to \(\alpha=2\).  For \(\alpha<2\), stable variables have infinite variance; for \(\alpha\leq 1\), the mean is not finite \citep{Nolan,Samorodnitsky:Taqqu:1994}.

For jointly \(S\alpha S\) random variables \(X_1,X_2\), \(1<\alpha<2\), the covariation of \(X_1\) on \(X_2\) is
\begin{equation}\label{eq:covariation}
[X_1,X_2]_\alpha=\int_{\Sph_2}s_1s_2^{\langle\alpha-1\rangle}\,\Gamma(ds),
\qquad a^{\langle p\rangle}=|a|^p\sgn(a).
\end{equation}
Covariation is useful but asymmetric, and zero covariation does not imply independence in general.  Codifference provides a moment-free alternative.  For \(X,Y\in S\alpha S\), define
\begin{equation}\label{eq:stable_norm}
\|X\|_\alpha^\alpha=-\log \E\exp(iX),
\end{equation}
where the right-hand side is real and nonnegative for symmetric stable laws.  The codifference is
defined by \begin{equation}\label{eq:classical_codiff}
\tau_{X,Y}=\|X\|_\alpha^\alpha+\|Y\|_\alpha^\alpha-\|X-Y\|_\alpha^\alpha .
\end{equation}
and under the symmetry assumption, reduces to,
\begin{equation}\label{eq:classical_codiff_cf}
\tau_{X,Y}=\log\left\{\frac{\E\cos(X-Y)}{\E\cos X\,\E\cos Y}\right\},
\end{equation}
whenever the denominator is nonzero.  If \(X\) and \(Y\) are independent then \(\tau_{X,Y}=0\); the converse holds for \(0<\alpha<1\), but not in general for \(1<\alpha<2\) \citep{Samorodnitsky:Taqqu:1994}.

For \(1<\alpha<2\), the extended codifference,  see e.g. \citep{maroufy2025measure} 
\begin{equation}\label{eq:extended_codiff}
\tau^*_{X,Y}=\|X\|_\alpha^\alpha+\|Y\|_\alpha^\alpha
-\frac{1}{2}\left(\|X-Y\|_\alpha^\alpha+\|X+Y\|_\alpha^\alpha\right)
\end{equation}
has the desired independence characterization.  Its characteristic-function form is
\begin{equation}\label{eq:extended_codiff_cf}
\tau^*_{X,Y}=\log\left\{
\frac{\big(\E e^{i(X-Y)}\E e^{i(X+Y)}\big)^{1/2}}
{\E e^{iX}\,\E e^{iY}}
\right\}.
\end{equation}

\begin{lemma}\label{lem:extended_independence}
Let \((X,Y)\) be jointly \(S\alpha S\), \(1<\alpha<2\).  Then \(X\) and \(Y\) are independent if and only if \(\tau^*_{X,Y}=0\).
\end{lemma}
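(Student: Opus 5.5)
The approach is to express \(\tau^*_{X,Y}\) as an integral against the spectral measure \(\Gamma\) and show that the integrand is nonnegative and vanishes exactly on the coordinate axes. Write \((X,Y)\) with spectral measure \(\Gamma\) on \(\Sph_2\) as in \eqref{eq:stable_cf}. Taking \(\theta=(a,b)\) there gives \(\|aX+bY\|_\alpha^\alpha=\int_{\Sph_2}|a s_1+b s_2|^\alpha\,\Gamma(ds)\), and this is real by symmetry. Substituting \((a,b)=(1,0),(0,1),(1,-1),(1,1)\) into \eqref{eq:extended_codiff} gives
\[
\tau^*_{X,Y}=\int_{\Sph_2} f(s)\,\Gamma(ds),\qquad
f(s)=|s_1|^\alpha+|s_2|^\alpha-\tfrac12\bigl(|s_1-s_2|^\alpha+|s_1+s_2|^\alpha\bigr).
\]
This also shows that \eqref{eq:extended_codiff_cf} is just the logarithm of the corresponding characteristic functions. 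The proof then has three steps: (i) \(f\ge 0\) on \(\Sph_2\) when \(1<\alpha<2\); (ii) \(f(s)=0\) iff \(s_1s_2=0\); (iii) \(X,Y\) are independent iff \(\Gamma\) is concentrated on \(\{\pm e_1,\pm e_2\}\).

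For (i)--(ii), which is the main step, I would prove the Clarkson-type inequality \(|a+b|^\alpha+|a-b|^\alpha\le 2(|a|^\alpha+|b|^\alpha)\) with strict inequality whenever \(ab\neq0\). Since the expression is symmetric in \(a,b\) and in their signs, assume \(a\ge b>0\) and set \(t=b/a\in(0,1]\). Let \(\varphi(t)=(1+t)^\alpha+(1-t)^\alpha-2-2t^\alpha\), so that \(\varphi(0)=0\) and
\[
\varphi'(t)=\alpha\bigl[(1+t)^{\alpha-1}-(1-t)^{\alpha-1}-2t^{\alpha-1}\bigr].
\]
Because \(0<\alpha-1<1\), the map \(u\mapsto u^{\alpha-1}\) is concave with value \(0\) at \(0\), hence subadditive. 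Therefore \((1+t)^{\alpha-1}\le(1-t)^{\alpha-1}+(2t)^{\alpha-1}\). Moreover \((2t)^{\alpha-1}=2^{\alpha-1}t^{\alpha-1}<2t^{\alpha-1}\) since \(\alpha<2\). Hence \(\varphi'<0\) on \((0,1]\), so \(\varphi(t)<0\) there, which is the strict inequality. Dividing by \(2\) gives \(f(s)>0\) whenever \(s_1s_2\ne0\), and clearly \(f(s)=0\) when \(s_1s_2=0\). As a sanity check, at \(s=(1,1)/\sqrt2\) we get \(f=2^{1-\alpha/2}-2^{\alpha/2-1}>0\) precisely because \(\alpha<2\). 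This is also where the restriction \(\alpha<2\) enters.

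For (iii), I would invoke the standard fact (Samorodnitsky and Taqqu 1994) that the components of a jointly \(S\alpha S\) vector are independent iff the spectral measure is supported on the coordinate axes. The short argument is as follows. If \(\Gamma\) is supported on the axes, the integral in \eqref{eq:stable_cf} splits as \(|\theta_1|^\alpha\Gamma(\{\pm e_1\})+|\theta_2|^\alpha\Gamma(\{\pm e_2\})\), so the characteristic function factorizes. Conversely, under independence the characteristic function equals that of the vector whose spectral measure is placed on the axes, and uniqueness of \(\Gamma\) forces the two measures to coincide.

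With (i)--(iii) in hand, the lemma follows. If \(X\) and \(Y\) are independent, then \(\Gamma\) lives on the axes where \(f=0\), so \(\tau^*_{X,Y}=0\). Conversely, if \(\tau^*_{X,Y}=0\), then since \(f\ge0\) we get \(\Gamma(\{f>0\})=0\). By (ii) this means \(\Gamma\) is concentrated on \(\{s_1s_2=0\}\cap\Sph_2=\{\pm e_1,\pm e_2\}\), and (iii) gives independence. The only real obstacle is the strict inequality in (ii); the derivative argument above handles it.
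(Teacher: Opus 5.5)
Your proof is correct and follows the paper's route. Both write \(\tau^*_{X,Y}=\int_{\Sph_2} q_\alpha(s_1,s_2)\,\Gamma(ds)\), show the integrand is nonnegative and vanishes exactly when \(s_1s_2=0\), and then identify axis-supported spectral measures with independence. There are two differences. The paper proves the key inequality in two lines, using concavity of \(x\mapsto x^{\alpha/2}\) on \((u\pm v)^2\) followed by strict subadditivity of \(x^{\alpha/2}\); this works for every \(0<\alpha<2\), whereas your derivative argument via subadditivity of \(u^{\alpha-1}\) relies on \(\alpha>1\). You also justify the ``independence \(\Rightarrow\) axis support'' step through uniqueness of \(\Gamma\), which the paper only asserts.
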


Given independent observations \((X_1,Y_1),\ldots,(X_n,Y_n)\), the natural empirical estimates used in this paper are
\begin{align}
\widehat\tau_{X,Y}
&=\log\left|\frac{n^{-1}\sum_{\ell=1}^n\cos(X_\ell-Y_\ell)}
{\left(n^{-1}\sum_{\ell=1}^n\cos X_\ell\right)
 \left(n^{-1}\sum_{\ell=1}^n\cos Y_\ell\right)}\right|,\label{eq:hat_tau}\\[3pt]
\widehat\tau^*_{X,Y}
&=\frac{1}{2}\log\left|\frac{\left(n^{-1}\sum_{\ell=1}^n\cos(X_\ell-Y_\ell)\right)
\left(n^{-1}\sum_{\ell=1}^n\cos(X_\ell+Y_\ell)\right)}
{\left(n^{-1}\sum_{\ell=1}^n\cos X_\ell\right)^2
 \left(n^{-1}\sum_{\ell=1}^n\cos Y_\ell\right)^2}\right| .\label{eq:hat_tau_star}
\end{align}
The formulas are written with cosines because the stable variables considered here are symmetric.  %\hl{no need for this sentence In computation, a small ridge may be added to denominators that are numerically close to zero.}

\section{Ball-codifference and screening}\label{sec:bcodif}

Let \(W=(X,Y)\) be a random element on \(\calX\times\calY\) with joint law \(\theta\) and marginal laws \(\mu\) and \(\nu\).  The spaces \((\calX,\rho_X)\) and \((\calY,\rho_Y)\) are separable metric spaces.  For \(x_1,x_2\in\calX\) and \(y_1,y_2\in\calY\), write
\[
B_X(x_1,x_2)=\{x\in\calX:\rho_X(x,x_1)\leq \rho_X(x_2,x_1)\},
\quad
B_Y(y_1,y_2)=\{y\in\calY:\rho_Y(y,y_1)\leq \rho_Y(y_2,y_1)\}.
\]
Let \(W_1=(X_1,Y_1)\) and \(W_2=(X_2,Y_2)\) be independent copies of \(W\), and set
\[
A_{12}=B_X(X_1,X_2)\times B_Y(Y_1,Y_2),\qquad
D_{12}=\theta(A_{12})-\mu\{B_X(X_1,X_2)\}\nu\{B_Y(Y_1,Y_2)\}.
\]
The local codifference weight associated with the ball \(A_{12}\) is
\begin{equation}\label{eq:local_codiff}
\kappa_{12}
=\E\{\cos(X-Y)\mid W\in A_{12}\}
-\E\{\cos X\mid W\in A_{12}\}\E\{\cos Y\mid W\in A_{12}\},
\end{equation}
with the convention \(\kappa_{12}=0\) when \(\theta(A_{12})=0\).  The signed Ball-codifference functional is defined by
\begin{equation}\label{eq:bcodif_population}
\BCodif_s^2(X,Y)=\E\left[D_{12}^2\kappa_{12}\right].
\end{equation}
For a nonnegative coefficient, replace \(\kappa_{12}\) by \(|\kappa_{12}|\).  The screening rule only needs a marginal utility, so either the signed or absolute version can be used consistently; the simulations in Section~\ref{sec:numerical} use the signed version to match the original implementation.

For a sample \((X_k,Y_k)_{k=1}^n\), define
\begin{align*}
\delta^{X}_{ij;k}&=\ind\{X_k\in B_X(X_i,X_j)\},
&\delta^{Y}_{ij;k}&=\ind\{Y_k\in B_Y(Y_i,Y_j)\},\\
\delta^{XY}_{ij;k}&=\delta^{X}_{ij;k}\delta^{Y}_{ij;k},
&\Delta^X_{ij}&=n^{-1}\sum_{k=1}^n\delta^{X}_{ij;k},\\
\Delta^Y_{ij}&=n^{-1}\sum_{k=1}^n\delta^{Y}_{ij;k},
&\Delta^{XY}_{ij}&=n^{-1}\sum_{k=1}^n\delta^{XY}_{ij;k}.
\end{align*}
Let \(N^{XY}_{ij}=\sum_{k=1}^n\delta^{XY}_{ij;k}\).  When \(N^{XY}_{ij}>0\), define
\begin{equation}\label{eq:local_codiff_empirical}
\widehat\kappa_{ij}=\frac{1}{N^{XY}_{ij}}\sum_{k=1}^n\delta^{XY}_{ij;k}\cos(X_k-Y_k)
-\left(\frac{1}{N^{XY}_{ij}}\sum_{k=1}^n\delta^{XY}_{ij;k}\cos X_k\right)
 \left(\frac{1}{N^{XY}_{ij}}\sum_{k=1}^n\delta^{XY}_{ij;k}\cos Y_k\right),
\end{equation}
and set \(\widehat\kappa_{ij}=0\) otherwise.  The empirical signed Ball-codifference score is
\begin{equation}\label{eq:bcodif_empirical}
\widehat\BCodif_s^{\,2}(X,Y)=\frac{1}{n^2}\sum_{i,j=1}^n
\left(\Delta^{XY}_{ij}-\Delta^X_{ij}\Delta^Y_{ij}\right)^2\widehat\kappa_{ij},
\end{equation}
which reduces to the standard Ball-covariance score by replacing \(\widehat\kappa_{ij}\) with 1.  A normalized Ball-codifference correlation can be defined by
\begin{equation}\label{eq:bcodifcor}
\widehat\BCodifCor^{\,2}(X,Y)
=\frac{\widehat\BCodif^{\,2}(X,Y)}{\big\{\widehat\BCodif^{\,2}(X,X)\widehat\BCodif^{\,2}(Y,Y)\big\}^{1/2}},
\end{equation}
when the denominator is positive, and by zero otherwise.  In practice, for screening very high-dimensional data, using the unnormalized score in \eqref{eq:bcodif_empirical} is faster and gives the same ordering whenever the marginal normalizers vary slowly across predictors.

\subsection{BCodifCor-SIS}\label{sec:sis}

Let \(Y\) be a response and let \(\bfX=(\bfX_1^\top,\ldots,\bfX_p^\top)^\top\) be predictors, where \(\bfX_r\in\R^{q_r}\).  Without specifying a regression model, define the active set
\begin{equation}\label{eq:active_set}
\calA=\{r: \Prob(B\mid \bfX_r)\text{ is nonconstant in }\bfX_r\text{ for some }B\in\sigma(Y)\},
\end{equation}
and let \(\calM=\{1,\ldots,p\}\setminus\calA\).  The BCodifCor-SIS procedure ranks predictors by
\[
\widehat\omega_r=\widehat\BCodifCor^{\,2}(\bfX_r,Y)
\quad\text{or, for faster screening, by}\quad
\widehat\omega_r=\widehat\BCodif_s^{\,2}(\bfX_r,Y).
\]
For a threshold \(\eta_n\) or a target model size \(d_n\), select
\[
\widehat\calA_n=\{r:\widehat\omega_r\geq \eta_n\}
\quad\text{or}\quad
\widehat\calA_n=\{r:\widehat\omega_r\text{ is among the largest }d_n\text{ scores}\}.
\]
The second form is used in the numerical experiments in Section \ref{sec:asymptotic}.

\section{Asymptotic theory}\label{sec:asymptotic}

The statistic in \eqref{eq:bcodif_empirical} is a bounded V-statistic after the empirical ball probabilities and local codifference terms are written as finite sums.  The following statement gives the form needed for inference and for the sure-screening argument.

\begin{assumption}\label{ass:main}
Let \((X_i,Y_i)\iid F\) on \(\calX\times\calY\).  Assume: (i) \(\calX\) and \(\calY\) are separable metric spaces; (ii) the class of product balls \(B_X(x_1,x_2)\times B_Y(y_1,y_2)\) is Glivenko-Cantelli and Donsker under \(F\); (iii) local ball probabilities that enter denominators are either bounded away from zero or are handled by a trimming sequence \(a_n\downarrow0\) with \(na_n\to\infty\); and (iv) the first Hoeffding projection of the limiting kernel has positive finite variance.
\end{assumption}

\begin{theorem}\label{thm:asymptotic_normality}
Under Assumption~\ref{ass:main},
\begin{equation}\label{eq:asymptotic_normality}
\sqrt{n}\left\{\widehat\BCodif_s^{\,2}(X,Y)-\BCodif_s^2(X,Y)\right\}
\dto N(0,\sigma_{\BCodif}^2),
\end{equation}
where \(\sigma_{\BCodif}^2=m^2\var\{\E[h(W_1,\ldots,W_m)\mid W_1]\}\) for the symmetric bounded kernel \(h\) representing the statistic as an order-\(m\) V-statistic.  If the first projection is zero, the usual degenerate V-statistic limit replaces \eqref{eq:asymptotic_normality}.
\end{theorem}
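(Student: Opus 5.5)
The plan is to write $\widehat\BCodif_s^{\,2}(X,Y)$ as a smooth functional of a small number of empirical processes indexed by the ball classes of Assumption~\ref{ass:main}(ii). Then I would linearize it with a functional delta method and identify the linear term with the first Hoeffding projection of a bounded V-statistic kernel.

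Concretely, for fixed $(i,j)$ every ingredient of the $(i,j)$ summand is an empirical average over $k$ of a bounded function of $(W_i,W_j,W_k)$. The ball probabilities $\Delta^X_{ij},\Delta^Y_{ij},\Delta^{XY}_{ij}$ are averages of indicators. The quantities entering $\widehat\kappa_{ij}$ are $S^{c}_{ij}=n^{-1}\sum_k\delta^{XY}_{ij;k}\cos(X_k-Y_k)$, $S^{x}_{ij}=n^{-1}\sum_k\delta^{XY}_{ij;k}\cos X_k$ and $S^{y}_{ij}=n^{-1}\sum_k\delta^{XY}_{ij;k}\cos Y_k$. With these, $\widehat\kappa_{ij}=S^{c}_{ij}/\Delta^{XY}_{ij}-S^{x}_{ij}S^{y}_{ij}/(\Delta^{XY}_{ij})^2$. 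Each summand is therefore $\Phi(\Delta^X_{ij},\Delta^Y_{ij},\Delta^{XY}_{ij},S^c_{ij},S^x_{ij},S^y_{ij})$, where $\Phi$ is a fixed rational function. On the region $\Delta^{XY}\ge a$, $\Phi$ is $C^1$ with bounded derivatives.

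Step one treats the denominators. By Assumption~\ref{ass:main}(iii), either $\theta(A_{12})$ is bounded below, or I replace the summand by its trimmed version with indicator $\ind\{\Delta^{XY}_{ij}\ge a_n\}$. I would then show the trimming costs $o_P(n^{-1/2})$. Because $D_{12}^2\le\theta(A_{12})^2$ up to constants and $|\kappa_{12}|\le 2$, the contribution of balls with mass below $a_n$ is $O(a_n^2)$. This needs $a_n=o(n^{-1/4})$, which I would impose in the trimming choice together with $na_n\to\infty$.

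Step two is uniform linearization. The classes $\{\ind_{A(w_1,w_2)}\}$ and $\{\ind_{A(w_1,w_2)}g\}$, for $g\in\{\cos(x-y),\cos x,\cos y\}$, are Donsker by (ii), since multiplying a Donsker class by a fixed bounded function preserves the property. Writing $\mathbb G_n=\sqrt n(F_n-F)$, a first-order Taylor expansion of $\Phi$ uniformly over $(w_1,w_2)$ gives
\[
\widehat\BCodif_s^{\,2}=\frac1{n^2}\sum_{i,j}\Big\{\Phi_0(W_i,W_j)+n^{-1/2}\nabla\Phi_0(W_i,W_j)\cdot\mathbb G_n f_{W_i,W_j}\Big\}+o_P(n^{-1/2}).
\]
Here $\Phi_0$ is $\Phi$ evaluated at the population quantities, so $D_{12}^2\kappa_{12}$ is $\Phi_0(W_1,W_2)$. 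The remainder is controlled by $\sup_{w_1,w_2}|\mathbb G_n f_{w_1,w_2}|^2=O_P(1)$ from the Donsker property, times $n^{-1}$. This is the quasi-Hadamard differentiability step. I expect it to be the main obstacle, because the index set depends on the data points $W_i,W_j$ themselves and the derivative blows up as $\Delta^{XY}\to0$. Uniformity over the index set is exactly what (ii) buys, and the blow-up is handled by the trimming of step one.

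Step three assembles the kernel. The first piece is a V-statistic of order 2 with kernel $\Phi_0$. The second piece equals $n^{-3}\sum_{i,j,k}\nabla\Phi_0(W_i,W_j)\cdot(f_{W_i,W_j}(W_k)-Ff_{W_i,W_j})$, a V-statistic of order 3. Symmetrizing their sum gives a bounded symmetric kernel $h$ of order $m=3$ with mean $\BCodif_s^2(X,Y)$. The standard Hoeffding decomposition for bounded V-statistics then applies: the diagonal terms are $O(n^{-1})$, and the higher-order projections are $O_P(n^{-1})$. This yields $\sqrt n(\widehat\BCodif_s^{\,2}-\BCodif_s^2)=m\,n^{-1/2}\sum_i\{h_1(W_i)-\BCodif_s^2\}+o_P(1)$, where $h_1(w)=\E[h(w,W_2,\ldots,W_m)]$. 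The CLT together with (iv) gives the normal limit with $\sigma^2_{\BCodif}=m^2\var\{h_1(W_1)\}$. If $h_1$ is constant, the second-order projection dominates and the usual weighted chi-square limit for degenerate V-statistics results.
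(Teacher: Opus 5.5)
Your architecture matches the paper's proof in Appendix~\ref{app:proof_clt}. You push the statistic through the smooth map $(u,u_X,u_Y,v_1,v_2,v_3)\mapsto(u-u_Xu_Y)^2(v_1/u-v_2v_3/u^2)$ of empirical measures indexed by the Donsker ball classes times bounded cosines, linearize, and feed the first Hoeffding projection to the CLT. Your uniform Taylor expansion with an order-$3$ linearized kernel is a hands-on version of the paper's chain rule through Proposition~\ref{prop:vg_derivative}. When the ball probabilities are bounded away from zero, your argument goes through as written.

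\textbf{The gap: the trimming bias.} For continuous marginals that branch is essentially vacuous, because the balls shrink as $W_2\to W_1$. So the trimming branch is the operative one, and it rests on a false bound: $D_{12}^2\lesssim\theta(A_{12})^2$ does not hold. We have $D_{12}=\theta(A_{12})-\mu\{B_X(X_1,X_2)\}\nu\{B_Y(Y_1,Y_2)\}$, and the product of the marginal ball masses need not be small when the rectangle carries little joint mass. Take a bivariate normal with correlation $-0.99$, $W_1=(2,2)$ and $W_2=(3,3)$. Then $A_{12}=[1,3]^2$ has $\theta(A_{12})<10^{-40}$, while $\mu\{[1,3]\}\nu\{[1,3]\}\approx0.025$. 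Sliding both centers outward along the diagonal makes $D_{12}^2/\theta(A_{12})^2$ arbitrarily large. Hence the trimming bias $\E[D_{12}^2\kappa_{12}\ind\{\theta(A_{12})<a_n\}]$ is in general only bounded by $2\Prob\{\theta(A_{12})<a_n\}$. That is a small-ball probability of the law of $\theta(A_{12})$, which Assumption~\ref{ass:main}(iii) does not control, so the $O(a_n^2)$ rate does not follow.

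\textbf{The rates do not fit together.} On $\{\Delta^{XY}\ge a_n\}$ the only general bound on the Hessian of $\Phi$ is $O(a_n^{-2})$. For instance, $\partial_{\Delta^{XY}}\partial_{S^c}\Phi=2D/\Delta^{XY}-D^2/(\Delta^{XY})^2$ with $D$ not small. Your remainder, $n^{-1}\sup_{w_1,w_2}|\mathbb G_nf_{w_1,w_2}|^2$ times the Hessian, is therefore $O_P(n^{-1}a_n^{-2})$. This is $o_P(n^{-1/2})$ only if $n^{1/4}a_n\to\infty$, the opposite of the rate $a_n=o(n^{-1/4})$ you impose in Step one.

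\textbf{What closing the trimming branch needs.} First, an explicit small-ball condition such as $\Prob\{\theta(A_{12})<a_n\}=o(n^{-1/2})$. Second, a remainder estimate that uses the local variance of the ball averages. Write $\pi_{ij}=F\{A(W_i,W_j)\}$. Given $(W_i,W_j)$, the quantities $\Delta^{XY}_{ij}$ and $S^c_{ij},S^x_{ij},S^y_{ij}$ fluctuate at scale $\sqrt{\pi_{ij}/n}$, not $n^{-1/2}$. Relative errors are then $O_P\{(n\pi_{ij})^{-1/2}\}$; this is what $na_n\to\infty$ controls, and uniformity over pairs needs a ratio-type bound for the ball class. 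The per-pair remainder becomes $O_P\{(n\pi_{ij})^{-1}\}$ rather than $O_P(n^{-1}\pi_{ij}^{-2})$.

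The paper does not quantify this step either; Appendix~\ref{app:proof_clt} simply asserts that Assumption~\ref{ass:main}(iii) makes the trimming error negligible. Still, the justification you supply for that step does not hold.
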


\begin{theorem}[Sure screening]\label{thm:sure_screening}
Suppose that Assumption~\ref{ass:main} holds uniformly over predictors and that, for sequences \(c_n>0\) and \(\epsilon_n=o(c_n)\),
\[
\min_{r\in\calA}\omega_r-\max_{r\in\calM}\omega_r\geq 2c_n,
\qquad
\max_{1\le r\le p}|\widehat\omega_r-\omega_r|\leq \epsilon_n
\]
with probability tending to one.  If the selected model size satisfies \(|\calA|\le d_n\) and the threshold lies between the inactive and active population utilities, then
\[
\Prob(\calA\subseteq\widehat\calA_n)\to1.
\]
\end{theorem}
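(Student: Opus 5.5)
The argument is deterministic once we restrict to the high-probability event
\[
E_n=\Big\{\max_{1\le r\le p}|\widehat\omega_r-\omega_r|\le\epsilon_n\Big\},
\]
which by hypothesis satisfies \(\Prob(E_n)\to1\). Uniform validity of Assumption~\ref{ass:main} is only needed to justify this uniform deviation bound, for instance through the bounded-kernel V-statistic representation behind Theorem~\ref{thm:asymptotic_normality} together with a union bound. Since it is assumed directly, we do not rederive it. It then suffices to show that \(E_n\subseteq\{\calA\subseteq\widehat\calA_n\}\) for all large \(n\), treating the threshold rule and the top-\(d_n\) rule separately.

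\emph{Step 1 (separation of empirical scores).} Because \(\epsilon_n=o(c_n)\), there is \(n_0\) such that \(\epsilon_n<c_n/2\) for \(n\ge n_0\). On \(E_n\), for any \(r\in\calA\) and \(s\in\calM\),
\[
\widehat\omega_r\ge\omega_r-\epsilon_n\ge\max_{\calM}\omega+2c_n-\epsilon_n>\max_{\calM}\omega+\epsilon_n\ge\widehat\omega_s .
\]
Hence every active empirical score strictly exceeds every inactive empirical score. More precisely, all active scores exceed \(\min_{\calA}\omega-\epsilon_n\), all inactive scores are below \(\max_{\calM}\omega+\epsilon_n\), and the gap between these two levels is at least \(2c_n-2\epsilon_n>c_n>0\).

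\emph{Step 2 (threshold rule).} We interpret ``the threshold lies between the inactive and active population utilities'' in the margin-compatible way: \(\eta_n\le\min_{\calA}\omega_r-\epsilon_n\), which holds, for example, when \(\eta_n\) lies within \(c_n\) of the midpoint. On \(E_n\) this gives \(\widehat\omega_r\ge\min_{\calA}\omega-\epsilon_n\ge\eta_n\) for every \(r\in\calA\), so \(\calA\subseteq\widehat\calA_n\). If \(\eta_n\) is only known to lie in the open interval \((\max_{\calM}\omega,\min_{\calA}\omega)\), we need it to stay a distance \(\ge\epsilon_n\) away from the active end. I will state this slack explicitly; it is the only genuinely delicate point, because a threshold sitting exactly at \(\min_{\calA}\omega_r\) could fail.

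\emph{Step 3 (top-\(d_n\) rule).} By Step 1, on \(E_n\) the \(|\calA|\) largest empirical scores are exactly the active ones. Ties between active and inactive scores are excluded by the strict inequality, so any tie-breaking convention is harmless. Since \(|\calA|\le d_n\), the largest \(d_n\) scores contain all of \(\calA\).

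\emph{Conclusion.} In either case \(\Prob(\calA\subseteq\widehat\calA_n)\ge\Prob(E_n)\to1\).

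The main obstacle is not probabilistic, since the uniform concentration is assumed. It is making the threshold condition precise enough, as described in Step 2, that the threshold rule is covered by the same margin argument.
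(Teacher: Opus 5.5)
Your proposal is correct and follows essentially the same route as the paper. Both arguments restrict to the event \(E_n\), use \(\epsilon_n=o(c_n)\) to get a positive gap \(2c_n-2\epsilon_n\) between every active and every inactive empirical score, handle the top-\(d_n\) rule via \(|\calA|\le d_n\), and handle the threshold rule with an \(\epsilon_n\)-margin. Your explicit slack \(\eta_n\le\min_{\calA}\omega_r-\epsilon_n\) matches the paper's choice \(\max_{\calM}\omega_s+\epsilon_n<\eta_n<\min_{\calA}\omega_s-\epsilon_n\); you correctly note that only the upper half of that condition is needed for the inclusion \(\calA\subseteq\widehat\calA_n\).
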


The second condition can be obtained from exponential inequalities for bounded empirical processes when \(\log p=o(nc_n^2)\).  The theorem is stated in this abstract form because the exact rate depends on the metric entropy of the predictor space and on whether the signed or absolute local codifference weight is used.

\section{Numerical study}\label{sec:numerical}

We generated \((Y,X_1,\ldots,X_{p-1})\) with \(p=1000\), sample size \(n=150\), and Toeplitz dependence
\[
\Sigma_{jk}=\rho^{|j-k|},\qquad 1\le j,k\le p.
\]
The response is the first component, \(Y=Z_1\), and the predictors are \(X_j=Z_{j+1}\), \(j=1,\ldots,p-1\).  Thus association decreases as the predictor index increases.  We considered two designs.  In the Gaussian design, \(Z\sim N_p(0,\Sigma)\).  In the sub-Gaussian stable design, \(Z=S^{1/2}G\), where \(G\sim N_p(0,\Sigma)\) and \(S\) is a positive stable mixing variable generated by the Chambers-Mallows-Stuck representation used in the accompanying code.

Each setting was repeated 500 times.  Let \(d_1=\lfloor n/\log n\rfloor\), \(d_2=2d_1\), and \(d_3=3d_1\).  The reported \(p_a(d)\) is the average proportion of the first \(d\) Toeplitz neighbors retained among the top \(d\) ranked variables.  The reported \(p_m(j)\) is the exact-rank recovery probability \(\Prob\{\rank(X_j)=j\}\) for \(j\in\{1,2,5,10,15\}\).  These diagnostics match the legacy simulation code and measure ranking fidelity rather than binary model-selection success.

\begin{table}[!t]
\centering
\rowcolors{2}{gray!15}{white}
\caption{BCodifCor-SIS and BCor-SIS performance for sub-Gaussian stable random vectors.  The parameter \(\rho\) controls the Toeplitz dependence, and \(\alpha\) is the tail index used in the scale mixture.}
\label{tab:subgaussian}
\resizebox{\textwidth}{!}{%
\begin{tabular}{lllrrrrrrrr}
\toprule
Method & $\alpha$ & $\rho$ & \multicolumn{3}{c}{$p_a$} & \multicolumn{5}{c}{$p_m$}\\
\cmidrule(lr){4-6}\cmidrule(lr){7-11}
 & & & $d_1$ & $d_2$ & $d_3$ & $X_1$ & $X_2$ & $X_5$ & $X_{10}$ & $X_{15}$\\
\midrule
BCor-SIS & 0.9 & 0.95 & 0.609 & 0.372 & 0.293 & 1.00 & 0.98 & 0.80 & 0.40 & 0.40\\
BCodifCor-SIS & 0.9 & 0.95 & 0.766 & 0.509 & 0.400 & 1.00 & 0.96 & 0.70 & 0.44 & 0.22\\
BCor-SIS & 0.8 & 0.95 & 0.600 & 0.376 & 0.298 & 1.00 & 0.96 & 0.78 & 0.36 & 0.16\\
BCodifCor-SIS & 0.8 & 0.95 & 0.732 & 0.500 & 0.409 & 1.00 & 0.98 & 0.80 & 0.24 & 0.10\\
BCor-SIS & 0.5 & 0.95 & 0.579 & 0.368 & 0.295 & 1.00 & 0.96 & 0.40 & 0.30 & 0.12\\
BCodifCor-SIS & 0.5 & 0.95 & 0.650 & 0.445 & 0.351 & 1.00 & 0.98 & 0.62 & 0.40 & 0.18\\
\addlinespace
BCor-SIS & 0.9 & 0.90 & 0.316 & 0.213 & 0.193 & 1.00 & 0.98 & 0.66 & 0.14 & 0.00\\
BCodifCor-SIS & 0.9 & 0.90 & 0.437 & 0.276 & 0.233 & 1.00 & 0.98 & 0.64 & 0.00 & 0.00\\
BCor-SIS & 0.8 & 0.90 & 0.304 & 0.202 & 0.183 & 1.00 & 1.00 & 0.62 & 0.08 & 0.00\\
BCodifCor-SIS & 0.8 & 0.90 & 0.402 & 0.266 & 0.234 & 1.00 & 1.00 & 0.54 & 0.00 & 0.00\\
BCor-SIS & 0.5 & 0.90 & 0.303 & 0.200 & 0.180 & 0.96 & 0.88 & 0.34 & 0.06 & 0.02\\
BCodifCor-SIS & 0.5 & 0.90 & 0.367 & 0.261 & 0.217 & 1.00 & 0.98 & 0.40 & 0.04 & 0.00\\
\addlinespace
BCor-SIS & 0.9 & 0.80 & 0.176 & 0.133 & 0.138 & 1.00 & 0.92 & 0.22 & 0.00 & 0.00\\
BCodifCor-SIS & 0.9 & 0.80 & 0.240 & 0.171 & 0.160 & 0.98 & 0.86 & 0.04 & 0.00 & 0.02\\
BCor-SIS & 0.8 & 0.80 & 0.150 & 0.122 & 0.131 & 1.00 & 0.92 & 0.10 & 0.00 & 0.00\\
BCodifCor-SIS & 0.8 & 0.80 & 0.212 & 0.163 & 0.163 & 1.00 & 0.92 & 0.02 & 0.00 & 0.00\\
BCor-SIS & 0.5 & 0.80 & 0.146 & 0.117 & 0.130 & 1.00 & 0.84 & 0.04 & 0.00 & 0.00\\
BCodifCor-SIS & 0.5 & 0.80 & 0.194 & 0.159 & 0.159 & 1.00 & 0.84 & 0.04 & 0.00 & 0.00\\
\addlinespace
BCor-SIS & 0.9 & 0.50 & 0.061 & 0.081 & 0.104 & 0.82 & 0.10 & 0.00 & 0.00 & 0.00\\
BCodifCor-SIS & 0.9 & 0.50 & 0.081 & 0.091 & 0.107 & 0.60 & 0.02 & 0.00 & 0.00 & 0.00\\
BCor-SIS & 0.8 & 0.50 & 0.055 & 0.074 & 0.096 & 0.88 & 0.06 & 0.00 & 0.00 & 0.00\\
BCodifCor-SIS & 0.8 & 0.50 & 0.076 & 0.083 & 0.107 & 0.62 & 0.02 & 0.00 & 0.00 & 0.00\\
BCor-SIS & 0.5 & 0.50 & 0.062 & 0.073 & 0.094 & 0.62 & 0.06 & 0.00 & 0.00 & 0.00\\
BCodifCor-SIS & 0.5 & 0.50 & 0.072 & 0.084 & 0.104 & 0.56 & 0.00 & 0.00 & 0.00 & 0.00\\
\bottomrule
\end{tabular}}
\end{table}

\begin{table}[!t]
\centering
\caption{BCodifCor-SIS and BCor-SIS performance for Gaussian random vectors.}
\label{tab:normal}
\rowcolors{2}{gray!15}{white}
\resizebox{\textwidth}{!}{%
\begin{tabular}{llrrrrrrrr}
\toprule
Method & $\rho$ & \multicolumn{3}{c}{$p_a$} & \multicolumn{5}{c}{$p_m$}\\
\cmidrule(lr){3-5}\cmidrule(lr){6-10}
 & & $d_1$ & $d_2$ & $d_3$ & $X_1$ & $X_2$ & $X_5$ & $X_{10}$ & $X_{15}$\\
\midrule
BCor-SIS & 0.95 & 0.738 & 0.464 & 0.348 & 1.00 & 1.00 & 0.94 & 0.80 & 0.40\\
BCodifCor-SIS & 0.95 & 0.824 & 0.558 & 0.423 & 1.00 & 1.00 & 0.90 & 0.68 & 0.30\\
BCor-SIS & 0.90 & 0.406 & 0.249 & 0.213 & 1.00 & 1.00 & 0.82 & 0.40 & 0.04\\
BCodifCor-SIS & 0.90 & 0.478 & 0.297 & 0.251 & 1.00 & 1.00 & 0.80 & 0.20 & 0.02\\
BCor-SIS & 0.80 & 0.202 & 0.144 & 0.139 & 1.00 & 1.00 & 0.48 & 0.00 & 0.00\\
BCodifCor-SIS & 0.80 & 0.232 & 0.161 & 0.157 & 1.00 & 1.00 & 0.24 & 0.00 & 0.00\\
BCor-SIS & 0.50 & 0.074 & 0.078 & 0.107 & 0.96 & 0.26 & 0.00 & 0.00 & 0.00\\
BCodifCor-SIS & 0.50 & 0.095 & 0.092 & 0.110 & 0.90 & 0.08 & 0.00 & 0.00 & 0.00\\
BCor-SIS & 0.20 & 0.043 & 0.065 & 0.091 & 0.12 & 0.00 & 0.00 & 0.00 & 0.00\\
BCodifCor-SIS & 0.20 & 0.048 & 0.070 & 0.092 & 0.06 & 0.00 & 0.00 & 0.00 & 0.02\\
\bottomrule
\end{tabular}}
\end{table}
\FloatBarrier

As illustrated in Tables \ref{tab:subgaussian} and \ref{tab:normal} 
for both simulated datasets from normal distribution and heavy-tailed distribution, Ball-codifference dominates Ball-Covariance in estimating the \(p_a\)s at all three levels of $d_1, d_2, d_3$, and for all values of $\rho$ and $\alpha$. 
Specifically, in Table \ref{tab:subgaussian}, the maximum gain in \(p_a\) attained by BCodifCor\-SIS over BCor\-SIS, ranges from $0.11$ (for $d_3$ at $\alpha=0.8$ and "$\rho=0.95$) to $0.16$ (for $d_1$ at $\alpha=0.9$ and $\rho=0.95$). converting this differences into percentages the (dividing by the values for BCor\-SIS) the maximum gain rates range form $26\%$ to $37\%$.

This fact confirms that Ball-codifference retains the highly correlated variables with teh outcome with a higher probability compared to Ball-Covariance. In a statistical prediction modeling this better performance can translate into a significantly higher prediction accuracy.

Overall, the codifference-weighted score improves the average retention rate \(p_a\) in most settings.  The gains are largest when the Toeplitz dependence is strong and the tails are heavy.  Exact-rank recovery \(p_m(j)\) is more variable because it requires the full ranking to place each selected neighbor at one specified position.  Thus, a method may improve top-\(d\) screening even when exact-rank probabilities at particular indices are similar or smaller.

\section{Data Example}\label{Data_Example}
To compare the prediction power between the BCor-SIS and BCodifCor-SIS, we applied both screening methods to the Riboflavin dataset [cite], a widely used benchmark for high-dimensional regression and feature screening. The dataset consists of gene expression measurements from 71 strains of Bacillus subtilis to predict riboflavin (vitamin B2) production. Each observation contains expression levels for 4,088 genes. The response variable is the logarithm of riboflavin production, which is continuous. Before running a linear regression, we prescreen the variables using both BCor-SIS and BCodifCor-SIS methods, for the number of covariates in 2-30,
and compare the prediction accuracy between the two models using the mean squared prediction error (MSPE). For each predetermined number of covariates, we divided the data into train (80\%) and test (20\%) and calculated MEPE based on $m=500$ iterations. \\ 
Table \ref{Riboflavin} illustrates that the BCodifCor-SIS screening method dominates BCor-SIS significantly, except for the two instances ($n_{cov}=6,7$) of small better performance for BCor-SIS. Specifically, the percentage of improvement in MSPE reaches even 35.6, 35.7, and 35.8 with $n_{cov}=11, 17, 23$. To account for the sensitivity of the results, we repeated the experiment with a different number of iterations ($m=100, 200$) and different proportions for dividing the data into train and test (70\% vs. 30\%  and 90\% vs. 10\%), and obtained similar results.   
%$n_{\mathrm{cov}}$ is the number of pre-screened covariates; RMSE$_{\mathrm{BCorr}}$ and  RMSE$_{\mathrm{BCodif}}$ are prediction means square error, 
%Diff$_{\mathrm{mean}}$ is the mean difference between them,
%$p$-value is P\-value for significance difference and Improvement (\%) is the percentage improvement in 

\begin{table}[!t]
\centering
\caption{Comparison of prediction accuracy of linear regression between BCodifCor-SIS and BCor-SIS pre-screening methods. $n_{\mathrm{cov}}$ is the number of pre-screened covariates; RMSE$_{\mathrm{BCorr}}$ and  RMSE$_{\mathrm{BCodif}}$ are prediction means square errors based on; Diff$_{\mathrm{mean}}=$RMSE$_{\mathrm{BCodif}}-$ RMSE$_{\mathrm{BCorr}}$,
$p$-value is the P-value for the statistical test of mean difference, and Improvement (\%) is the percentage improvement in prediction accuracy between the two methods.}
\label{Riboflavin}
\begin{tabular}{rrrrrr}
\toprule
$n_{\mathrm{cov}}$ & RMSE$_{\mathrm{BCorr}}$ & RMSE$_{\mathrm{BCodif}}$ &
Diff$_{\mathrm{mean}}$ & $p$-value & Improvement (\%) \\
\midrule
2  & 0.7322 & 0.7514 &  0.0192 & $<0.001$ &  2.6 \\
3  & 0.7641 & 0.7780 &  0.0139 & $<0.001$ &  1.8 \\
4  & 0.7650 & 0.7716 &  0.0066 & 0.0827   &  0.9 \\
5  & 0.7744 & 0.7877 &  0.0133 & 0.0039   &  1.7 \\
6  & 0.7885 & 0.7564 & -0.0320 & $<0.001$ & -4.1 \\
7  & 0.7816 & 0.7795 & -0.0021 & 0.6960   & -0.3 \\
8  & 0.7471 & 0.7998 &  0.0528 & $<0.001$ &  7.1 \\
9  & 0.7423 & 0.8257 &  0.0835 & $<0.001$ & 11.2 \\
10 & 0.7565 & 0.8365 &  0.0800 & $<0.001$ & 10.6 \\
11 & 0.6326 & 0.8577 &  0.2250 & $<0.001$ & 35.6 \\
12 & 0.6218 & 0.7688 &  0.1470 & $<0.001$ & 23.6 \\
13 & 0.6307 & 0.8006 &  0.1699 & $<0.001$ & 26.9 \\
14 & 0.6163 & 0.7677 &  0.1513 & $<0.001$ & 24.6 \\
15 & 0.6143 & 0.7934 &  0.1791 & $<0.001$ & 29.2 \\
16 & 0.5934 & 0.7817 &  0.1883 & $<0.001$ & 31.7 \\
17 & 0.5968 & 0.8096 &  0.2128 & $<0.001$ & 35.7 \\
18 & 0.6237 & 0.8117 &  0.1880 & $<0.001$ & 30.1 \\
19 & 0.6005 & 0.8408 &  0.2404 & $<0.001$ & 40.0 \\
20 & 0.6129 & 0.8054 &  0.1925 & $<0.001$ & 31.4 \\
21 & 0.6270 & 0.8264 &  0.1994 & $<0.001$ & 31.8 \\
22 & 0.6438 & 0.8368 &  0.1930 & $<0.001$ & 30.0 \\
23 & 0.6262 & 0.8504 &  0.2242 & $<0.001$ & 35.8 \\
24 & 0.6418 & 0.8154 &  0.1736 & $<0.001$ & 27.1 \\
25 & 0.6560 & 0.8337 &  0.1776 & $<0.001$ & 27.1 \\
26 & 0.6783 & 0.7448 &  0.0665 & $<0.001$ &  9.8 \\
27 & 0.7098 & 0.7572 &  0.0474 & $<0.001$ &  6.7 \\
28 & 0.7135 & 0.7991 &  0.0857 & $<0.001$ & 12.0 \\
29 & 0.6809 & 0.8202 &  0.1393 & $<0.001$ & 20.5 \\
\bottomrule
\end{tabular}
\label{tab:rmse_comparison}
\end{table}

\section{Discussion}\label{sec:discussion}

Ball-codifference screening is intended for high-dimensional problems in which ordinary covariance is either undefined or too unstable to serve as a reliable screening utility.  The statistic uses only indicator functions of random balls and bounded trigonometric contrasts.  Consequently, the construction does not require finite moments, while still retaining the geometric, model-free nature of Ball covariance.  This is useful in heavy-tailed applications where a small number of extreme observations may contain genuine signal rather than simple contamination.

The numerical results show that the proposed codifference weighting can improve top-ranked retention in the Toeplitz designs considered here.  The improvement is most visible when the dependence is strong and the marginal distribution is heavy-tailed.  The exact-rank probabilities are less stable, which is expected because they are more stringent than top-\(d\) recovery and are sensitive to small changes in neighboring marginal utilities.

Several practical points are worth noting.  First, the signed version of Ball-codifference is natural when the direction of the codifference contrast is meaningful, whereas the absolute value version may be preferable when the goal is purely to rank strength of association.  Second, the normalized statistic is scale-free but computationally more expensive because it requires marginal self-normalizers.  For very large \(p\), a computationally efficient workflow is to use the unnormalized score for an initial pass and then recompute the normalized score on a smaller candidate set.  Third, threshold selection can be handled as in other SIS procedures: either by retaining \(d_n=\lfloor n/\log n\rfloor\), or by using a data-adaptive cutoff calibrated by resampling.

The theory in this paper is stated under regularity conditions that make the random-ball empirical process and the local codifference denominators well behaved.  These conditions are standard for V-statistic and empirical-process arguments, but sharper finite-sample concentration bounds would be useful for automatic threshold selection.  Another useful direction is to develop faster implementations for grouped predictors and categorical responses without sacrificing the moment-free nature of the method.

\appendix

\section{Proof of Lemma~\ref{lem:extended_independence}}\label{app:proof_extended}

Let \((X,Y)\) be jointly symmetric \(\alpha\)-stable with \(1<\alpha<2\), and let \(\Gamma\) be its spectral measure on \(\Sph_2\).  For any real numbers \(a\) and \(b\), the characteristic function representation gives
\begin{equation}\label{eq:linear_stable_norm}
\|aX+bY\|_\alpha^\alpha
=\int_{\Sph_2}|as_1+bs_2|^\alpha\,\Gamma(ds).
\end{equation}
Substituting \((a,b)=(1,0),(0,1),(1,-1)\), and \((1,1)\) into \eqref{eq:linear_stable_norm} yields
\begin{align*}
\tau^*_{X,Y}
&=\int_{\Sph_2}\left\{|s_1|^\alpha+|s_2|^\alpha
-\frac{1}{2}\left(|s_1-s_2|^\alpha+|s_1+s_2|^\alpha\right)\right\}\Gamma(ds)  \\
&=\int_{\Sph_2} q_\alpha(s_1,s_2)\,\Gamma(ds),
\end{align*}
where
\[
q_\alpha(u,v)=|u|^\alpha+|v|^\alpha
-\frac{1}{2}\{|u-v|^\alpha+|u+v|^\alpha\}.
\]
We first record why the integrand is nonnegative.  Put \(p=\alpha/2\in(1/2,1)\).  By concavity of \(x\mapsto x^p\) on \([0,\infty)\),
\[
\frac{|u+v|^\alpha+|u-v|^\alpha}{2}
=\frac{\{(u+v)^2\}^{p}+\{(u-v)^2\}^{p}}{2}
\leq\left\{\frac{(u+v)^2+(u-v)^2}{2}\right\}^{p}
=(u^2+v^2)^p .
\]
Since \(p<1\), subadditivity of \(x^p\) gives
\[
(u^2+v^2)^p\leq |u|^{2p}+|v|^{2p}=|u|^\alpha+|v|^\alpha.
\]
Thus \(q_\alpha(u,v)\ge0\).  Moreover, the second inequality is strict whenever both \(|u|\) and \(|v|\) are positive.  Therefore \(q_\alpha(u,v)=0\) can occur only when \(uv=0\).  Conversely, if \(u=0\) or \(v=0\), then the equality is immediate.  Hence
\begin{equation}\label{eq:q_zero_axes}
q_\alpha(u,v)=0 \quad\Longleftrightarrow\quad uv=0.
\end{equation}

If \(X\) and \(Y\) are independent, the spectral measure of \((X,Y)\) is concentrated on the two coordinate axes.  Equation \eqref{eq:q_zero_axes} then gives \(q_\alpha=0\) \(\Gamma\)-almost everywhere, and consequently \(\tau^*_{X,Y}=0\).

Conversely, suppose \(\tau^*_{X,Y}=0\).  Since the integrand is nonnegative, \eqref{eq:q_zero_axes} implies that \(\Gamma\) is concentrated on
\[
\{(s_1,s_2)\in\Sph_2:s_1s_2=0\},
\]
the union of the two coordinate axes.  For arbitrary \(t,u\in\R\), the joint characteristic function then factorizes:
\begin{align*}
\E e^{i(tX+uY)}
&=\exp\left\{-\int_{\Sph_2}|ts_1+us_2|^\alpha\,\Gamma(ds)\right\}  \\
&=\exp\left\{-\int_{\{s_2=0\}}|ts_1|^\alpha\,\Gamma(ds)
      -\int_{\{s_1=0\}}|us_2|^\alpha\,\Gamma(ds)\right\} \\
&=\E e^{itX}\,\E e^{iuY}.
\end{align*}
Factorization of the joint characteristic function for all \((t,u)\) is equivalent to independence.  This proves the lemma.

\section{Proof of Theorem~\ref{thm:asymptotic_normality}}\label{app:proof_clt}

Let \(W=(X,Y)\) and let \(F\) denote the joint law of \(W\).  For \(w_r=(x_r,y_r)\), write
\[
A(w_1,w_2)=B_X(x_1,x_2)\times B_Y(y_1,y_2).
\]
For any probability measure \(G\) in a neighborhood of \(F\), define
\begin{align*}
p_G(w_1,w_2)&=G\{A(w_1,w_2)\},\\
p^X_G(w_1,w_2)&=G_X\{B_X(x_1,x_2)\},\\
p^Y_G(w_1,w_2)&=G_Y\{B_Y(y_1,y_2)\},\\
a_{1,G}(w_1,w_2)&=\int_{A(w_1,w_2)}\cos(x-y)\,G(dw),\\
a_{2,G}(w_1,w_2)&=\int_{A(w_1,w_2)}\cos x\,G(dw),\\
a_{3,G}(w_1,w_2)&=\int_{A(w_1,w_2)}\cos y\,G(dw).
\end{align*}
When the denominator is positive, set
\begin{equation}\label{eq:kappa_G_app}
\kappa_G(w_1,w_2)=\frac{a_{1,G}(w_1,w_2)}{p_G(w_1,w_2)}
-\frac{a_{2,G}(w_1,w_2)a_{3,G}(w_1,w_2)}{p_G^2(w_1,w_2)}.
\end{equation}
If trimming is used, the denominators \(p_G\) and \(p_G^2\) in \eqref{eq:kappa_G_app} are replaced by \(p_G\vee a_n\) and \((p_G\vee a_n)^2\).  The population functional can be written as
\begin{equation}\label{eq:Phi_functional}
\Phi(G)=\iint \left\{p_G(w_1,w_2)-p^X_G(w_1,w_2)p^Y_G(w_1,w_2)\right\}^2
\kappa_G(w_1,w_2)\,G(dw_1)G(dw_2).
\end{equation}
The empirical statistic \(\widehat\BCodif_s^{\,2}(X,Y)\) is exactly \(\Phi(F_n)\), where \(F_n\) is the empirical distribution, with the convention used in the statistic for empty empirical balls.  The population target is \(\Phi(F)=\BCodif_s^2(X,Y)\).

The proof has three steps.

\emph{Step 1: boundedness and smoothness of the kernel.}  The functions \(1_A\), \(\cos(x-y)1_A\), \(\cos x\,1_A\), and \(\cos y\,1_A\) are uniformly bounded by one.  Under Assumption~\ref{ass:main}, the class of product balls is Donsker and Glivenko-Cantelli; multiplying by a bounded measurable trigonometric function preserves these properties.  Hence the empirical process indexed by the finite collection of classes entering \eqref{eq:Phi_functional} is tight and asymptotically Gaussian.  On the event where \(p_G\) is bounded away from zero, the map
\[
(u,u_X,u_Y,v_1,v_2,v_3)
\mapsto (u-u_Xu_Y)^2\left(\frac{v_1}{u}-\frac{v_2v_3}{u^2}\right)
\]
is continuously differentiable with bounded first derivative.  In the trimmed case the same conclusion holds for the map with \(u\) replaced by \(u\vee a_n\), and Assumption~\ref{ass:main}(iii) ensures that the trimming error is asymptotically negligible.

\emph{Step 2: V-functional expansion.}  The preceding smoothness shows that \(\Phi\) is a composition of smooth finite-dimensional maps and V-functionals of the form
\[
G\mapsto \int\cdots\int h(w_1,\ldots,w_m)\,G(dw_1)\cdots G(dw_m)
\]
with bounded kernels.  Proposition~\ref{prop:vg_derivative}, applied componentwise and combined with the chain rule, yields quasi-Hadamard differentiability of \(\Phi\) at \(F\).  Therefore
\begin{equation}\label{eq:Phi_delta_app}
\sqrt n\{\Phi(F_n)-\Phi(F)\}=\dot\Phi_F\{\sqrt n(F_n-F)\}+o_p(1).
\end{equation}
Equivalently, after symmetrizing the bounded kernel, the same expansion is the first-order Hoeffding decomposition
\begin{equation}\label{eq:Hoeffding_app}
\sqrt n\{\Phi(F_n)-\Phi(F)\}
=\frac{m}{\sqrt n}\sum_{r=1}^n h_1(W_r)+o_p(1),
\end{equation}
where
\[
h_1(w)=\E\{h(w,W_2,\ldots,W_m)\}-\E\{h(W_1,\ldots,W_m)\}
\]
is the first projection of the symmetric kernel \(h\).  The order \(m\) is finite because every term in \eqref{eq:bcodif_empirical} is a finite product of empirical averages.  For example, after writing \(\widehat\kappa_{ij}\) as in \eqref{eq:local_codiff_empirical}, the highest-order term contains the two ball centers, two observations from the local codifference product, and four observations from \((\Delta^X_{ij}\Delta^Y_{ij})^2\).

\emph{Step 3: central limit theorem.}  The projection \(h_1(W)\) is bounded, hence has finite second moment.  Assumption~\ref{ass:main}(iv) excludes the degenerate case by requiring
\[
0<\sigma_{\BCodif}^2=m^2\var\{h_1(W)\}<\infty.
\]
The ordinary central limit theorem applied to \eqref{eq:Hoeffding_app} gives
\[
\sqrt n\left\{\widehat\BCodif_s^{\,2}(X,Y)-\BCodif_s^2(X,Y)\right\}
\dto N(0,\sigma_{\BCodif}^2).
\]
If the first projection is zero, the first-order term in \eqref{eq:Hoeffding_app} vanishes and the standard degenerate V-statistic limit applies instead.

\section{Proof of Theorem~\ref{thm:sure_screening}}\label{app:proof_sis}

Let \(E_n=\{\max_{1\le r\le p}|\widehat\omega_r-\omega_r|\le\epsilon_n\}\).  By assumption, \(\Prob(E_n)\to1\).  On \(E_n\), every active predictor \(r\in\calA\) satisfies
\[
\widehat\omega_r\ge \omega_r-\epsilon_n
\ge \min_{s\in\calA}\omega_s-\epsilon_n,
\]
whereas every inactive predictor \(r\in\calM\) satisfies
\[
\widehat\omega_r\le \omega_r+\epsilon_n
\le \max_{s\in\calM}\omega_s+\epsilon_n.
\]
The population separation condition gives
\[
\min_{s\in\calA}\omega_s-\epsilon_n
-\left(\max_{s\in\calM}\omega_s+\epsilon_n\right)
\ge 2c_n-2\epsilon_n.
\]
Since \(\epsilon_n=o(c_n)\), the right-hand side is positive for all sufficiently large \(n\).  Thus, on \(E_n\), every active variable has a larger empirical utility than every inactive variable.

For threshold screening, choose \(\eta_n\) such that
\[
\max_{s\in\calM}\omega_s+\epsilon_n < \eta_n < \min_{s\in\calA}\omega_s-\epsilon_n.
\]
Then all active variables are selected and all inactive variables are excluded on \(E_n\).  For top-\(d_n\) screening, if \(d_n\ge |\calA|\), the same ordering implies that the leading \(d_n\) empirical scores must contain \(\calA\).  Therefore
\[
\Prob(\calA\subseteq\widehat\calA_n)\ge \Prob(E_n)\to1,
\]
which proves the sure-screening property.

\section{Details for the local-denominator step}\label{app:local}

This section gives the algebra behind the denominator argument used in the asymptotic proof.  It is included because the empirical Ball-codifference statistic contains random local denominators, and those denominators must be handled with some care.

For fixed indices \((i,j)\), write
\[
D_{ij}=\Delta^{XY}_{ij}-\Delta^X_{ij}\Delta^Y_{ij},\qquad
C_{ij}^{(1)}=\frac{1}{n}\sum_{k=1}^n\delta^{XY}_{ij;k}\cos(X_k-Y_k),
\]
\[
C_{ij}^{(2)}=\frac{1}{n}\sum_{k=1}^n\delta^{XY}_{ij;k}\cos X_k,
\qquad
C_{ij}^{(3)}=\frac{1}{n}\sum_{k=1}^n\delta^{XY}_{ij;k}\cos Y_k.
\]
Then \eqref{eq:local_codiff_empirical} is
\[
\widehat\kappa_{ij}=\frac{C_{ij}^{(1)}}{\Delta^{XY}_{ij}}
-\frac{C_{ij}^{(2)}C_{ij}^{(3)}}{(\Delta^{XY}_{ij})^2}
\]
whenever \(\Delta^{XY}_{ij}>0\).  Hence
\begin{equation}\label{eq:bcodif_denominator_expanded}
\widehat\BCodif_s^{\,2}(X,Y)=\frac{1}{n^2}\sum_{i,j=1}^n
D_{ij}^2\left\{\frac{C_{ij}^{(1)}}{\Delta^{XY}_{ij}}
-\frac{C_{ij}^{(2)}C_{ij}^{(3)}}{(\Delta^{XY}_{ij})^2}\right\}.
\end{equation}
Expanding \(D_{ij}^2=(\Delta^{XY}_{ij})^2-2\Delta^{XY}_{ij}\Delta^X_{ij}\Delta^Y_{ij}+(\Delta^X_{ij}\Delta^Y_{ij})^2\) gives the more revealing form
\begin{align}\label{eq:bcodif_xi_form}
\widehat\BCodif_s^{\,2}(X,Y)
&=\frac{1}{n^2}\sum_{i,j=1}^n
\left(C_{ij}^{(1)}-\frac{C_{ij}^{(2)}C_{ij}^{(3)}}{\Delta^{XY}_{ij}}\right)
\Delta^{XY}_{ij}
\left(1-\frac{\Delta^X_{ij}\Delta^Y_{ij}}{\Delta^{XY}_{ij}}\right)^2  \\
&=\frac{1}{n^4}\sum_{i,j,l,k=1}^n
\{U_1(i,j,l,k)-U_2(i,j,l,k)\}
\left(1-2\xi_{ij}+\xi_{ij}^2\right),
\end{align}
where
\[
\xi_{ij}=\frac{\Delta^X_{ij}\Delta^Y_{ij}}{\Delta^{XY}_{ij}},
\]
\[
U_1(i,j,l,k)=\delta^{XY}_{ij;l}\delta^{XY}_{ij;k}\cos(X_k-Y_k),
\qquad
U_2(i,j,l,k)=\delta^{XY}_{ij;l}\cos X_l\,\delta^{XY}_{ij;k}\cos Y_k.
\]
The terms \(\xi_{ij}\) and \(\xi_{ij}^2\) may themselves be written as empirical sums, so \eqref{eq:bcodif_xi_form} is a finite-order V-statistic apart from the inverse powers of \(\Delta^{XY}_{ij}\).

The correct population target of \(\Delta^{XY}_{ij}\) is local.  Conditional on \((W_i,W_j)\), define
\begin{equation}\label{eq:pi_ij}
\pi_{ij}=F\{A(W_i,W_j)\}=\Prob\{W\in B_X(X_i,X_j)\times B_Y(Y_i,Y_j)\mid W_i,W_j\}.
\end{equation}
Thus the denominator is not estimated by a single deterministic constant; it is estimated by the random ball probability \(\pi_{ij}\).  Let
\[
\Delta^{XY,-ij}_{ij}=\frac{1}{n-2}\sum_{k\ne i,j}\delta^{XY}_{ij;k}.
\]
Conditional on \((W_i,W_j)\), the summands in \(\Delta^{XY,-ij}_{ij}\) are iid Bernoulli variables with mean \(\pi_{ij}\).  Consequently,
\[
\E\left\{\left(\Delta^{XY,-ij}_{ij}-\pi_{ij}\right)^2\mid W_i,W_j\right\}
=\frac{\pi_{ij}(1-\pi_{ij})}{n-2}\le \frac{1}{4(n-2)}.
\]
The difference between \(\Delta^{XY}_{ij}\) and \(\Delta^{XY,-ij}_{ij}\) is at most \(2/n\).  Therefore, by Jensen's inequality,
\begin{equation}\label{eq:average_delta_rate}
\E\left[\frac{1}{n^2}\sum_{i,j=1}^n |\Delta^{XY}_{ij}-\pi_{ij}|\right]
\le \frac{1}{2\sqrt{n-2}}+\frac{2}{n}=O(n^{-1/2}).
\end{equation}
In particular,
\[
\frac{1}{n^2}\sum_{i,j=1}^n |\Delta^{XY}_{ij}-\pi_{ij}|=O_p(n^{-1/2}).
\]
A uniform version follows from the Glivenko-Cantelli property of the product-ball class in Assumption~\ref{ass:main}.

The inverse maps are locally Lipschitz away from zero.  For completeness we state the elementary bound used repeatedly.

\begin{lemma}\label{lem:inverse_continuity}
Let \(a\in\{1,2\}\) and \(\eta>0\).  If \(|x-\eta|<\eta/2\), then
\[
|x^{-a}-\eta^{-a}|\le C_{a,\eta}|x-\eta|,
\]
where \(C_{a,\eta}=a(\eta/2)^{-(a+1)}\).  Moreover, for any \(0<\beta<1\), by reducing the neighborhood if necessary,
\[
|x^{-a}-\eta^{-a}|\le |x-\eta|^\beta.
\]
\end{lemma}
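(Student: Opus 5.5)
The plan is to apply the mean value theorem to the map \(g(t)=t^{-a}\) on the interval between \(x\) and \(\eta\). Since \(|x-\eta|<\eta/2\), every point \(t\) between \(x\) and \(\eta\) satisfies \(t>\eta/2>0\), so \(g\) is smooth there. The mean value theorem gives some \(t\) in this interval with
\[
|x^{-a}-\eta^{-a}|=|g'(t)|\,|x-\eta|=a\,t^{-(a+1)}|x-\eta| .
\]
Because \(t>\eta/2\) and \(t\mapsto t^{-(a+1)}\) is decreasing, we get \(a\,t^{-(a+1)}\le a(\eta/2)^{-(a+1)}=C_{a,\eta}\). This is the first claim. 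Nothing about \(a\in\{1,2\}\) is used beyond \(a>0\).

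For the Hölder-type statement, fix \(0<\beta<1\) and start from the Lipschitz bound. Write
\[
C_{a,\eta}|x-\eta|=C_{a,\eta}|x-\eta|^{1-\beta}\,|x-\eta|^{\beta}.
\]
It then suffices to shrink the neighborhood to \(|x-\eta|<\min\{\eta/2,\;C_{a,\eta}^{-1/(1-\beta)}\}\). On this set \(C_{a,\eta}|x-\eta|^{1-\beta}\le1\), which yields \(|x^{-a}-\eta^{-a}|\le|x-\eta|^\beta\). The case \(x=\eta\) is trivial. The exponent \(1/(1-\beta)\) is finite precisely because \(\beta<1\), which explains why the statement excludes \(\beta=1\).

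There is no real obstacle here. The only point needing care is to keep the intermediate point \(t\) bounded away from zero, and the hypothesis \(|x-\eta|<\eta/2\) supplies exactly that. In the application (Appendix~\ref{app:local}), one sets \(x=\Delta^{XY}_{ij}\) and \(\eta=\pi_{ij}\). The lemma is then combined with \eqref{eq:average_delta_rate} on the event that \(\pi_{ij}\) is bounded below, or above the trimming level \(a_n\), as in Assumption~\ref{ass:main}(iii).
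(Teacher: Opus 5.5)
Your proof is correct and matches the paper's argument: the mean value theorem with $|{-a}\,t^{-(a+1)}|\le C_{a,\eta}$ on $(\eta/2,3\eta/2)$, then shrinking the neighborhood so that $C_{a,\eta}|x-\eta|^{1-\beta}\le 1$. Your explicit radius $\min\{\eta/2,\,C_{a,\eta}^{-1/(1-\beta)}\}$ simply makes the paper's ``reducing the neighborhood if necessary'' concrete.
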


\begin{proof}
The derivative of \(x\mapsto x^{-a}\) is \(-a x^{-(a+1)}\).  On \((\eta/2,3\eta/2)\), its absolute value is bounded by \(C_{a,\eta}\).  The mean-value theorem gives the first inequality.  Since \(0<\beta<1\), \(|x-\eta|\le |x-\eta|^\beta\) whenever \(|x-\eta|\le1\).  Taking the neighborhood small enough that \(C_{a,\eta}|x-\eta|^{1-\beta}\le1\) gives the second inequality.
\end{proof}

If \(\pi_{ij}\ge\pi_0>0\) almost surely, Lemma~\ref{lem:inverse_continuity} and \eqref{eq:average_delta_rate} imply
\[
\frac{1}{n^2}\sum_{i,j=1}^n
\left|\frac{1}{\Delta^{XY}_{ij}}-\frac{1}{\pi_{ij}}\right|=O_p(n^{-1/2}),
\qquad
\frac{1}{n^2}\sum_{i,j=1}^n
\left|\frac{1}{(\Delta^{XY}_{ij})^2}-\frac{1}{\pi_{ij}^2}\right|=O_p(n^{-1/2}),
\]
on the event \(\min_{i,j}\Delta^{XY}_{ij}\ge\pi_0/2\), whose probability tends to one under the uniform law of large numbers.  If small balls occur, the trimmed denominators \(\Delta^{XY}_{ij}\vee a_n\) and \(\pi_{ij}\vee a_n\) give the same conclusion under the rate condition in Assumption~\ref{ass:main}(iii).  This justifies replacing the empirical denominators by their local population counterparts inside the first-order V-statistic expansion.

\section{A quasi-Hadamard differentiability argument for V-functionals}\label{app:hadamard}

This appendix records the functional-delta-method calculation used in the proof of Theorem~\ref{thm:asymptotic_normality}.  Let \(F\) be a distribution function on \(\R^q\), and let
\begin{equation}\label{eq:Vg}
V_g(F)=\int\cdots\int g(x_1,\ldots,x_m)F(dx_1)\cdots F(dx_m),
\end{equation}
where \(g\) is bounded and measurable.  For \(i=1,\ldots,m\), define
\[
g_{i,F}(x)=\int\cdots\int g(x_1,\ldots,x_{i-1},x,x_{i+1},\ldots,x_m)
\prod_{\ell\ne i}F(dx_\ell).
\]

Let \(\mathbb D^q_{inc}\) denote the space of bounded cadlag distribution-type functions on \(\R^q\), equipped with the supremum norm over common continuity points, and let \(\mathbb C^q\) be the corresponding continuous tangent space.  Assume that each \(g_{i,F}\) is locally of bounded variation and that the boundary terms arising in integration by parts vanish.

\begin{assumption}\label{ass:vg}
The following hold: (i) the integrals in \eqref{eq:Vg} exist for \(F\) and for the empirical distribution \(F_n\); (ii) \(|g|\le M\) for some finite \(M\); (iii) each \(g_{i,F}\) is locally of bounded variation and \(\int |dg_{i,F}|<\infty\); (iv) the integration-by-parts boundary terms vanish; and (v) \(\sqrt n(F_n-F)\dto B^\circ\) in the tangent space.
\end{assumption}

\begin{proposition}\label{prop:vg_derivative}
Under Assumption~\ref{ass:vg}, the functional \(V_g\) is quasi-Hadamard differentiable at \(F\) tangentially to \(\mathbb C^q\), with derivative
\begin{equation}\label{eq:vg_derivative}
\dot V_{g,F}(v)=\sum_{i=1}^m(-1)^q\int v(x-)\,dg_{i,F}(x).
\end{equation}
Consequently,
\[
\sqrt n\{V_g(F_n)-V_g(F)\}\dto \dot V_{g,F}(B^\circ).
\]
If \(B^\circ\) is a centered Gaussian process with covariance function \(\Gamma\), then \(\dot V_{g,F}(B^\circ)\) is normal with variance
\[
\sum_{i,j=1}^m\int\int \Gamma(x,y)\,dg_{i,F}(x)\,dg_{j,F}(y),
\]
whenever the displayed integrals exist.
\end{proposition}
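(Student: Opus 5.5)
The plan is to linearize \(V_g\) coordinate by coordinate and then apply the functional delta method. Let \(F_t=F+t v_t\) with \(t\downarrow0\) and \(v_t\to v\in\mathbb C^q\) in the sup norm, and assume \(F_t\) stays in the domain where the integrals in \eqref{eq:Vg} exist. Since the product measure \(\prod_\ell F_t(dx_\ell)\) is multilinear in its factors, I expand
\[
\prod_{\ell=1}^m F_t(dx_\ell)-\prod_{\ell=1}^m F(dx_\ell)
=\sum_{i=1}^m \prod_{\ell<i}F_t(dx_\ell)\,(F_t-F)(dx_i)\prod_{\ell>i}F(dx_\ell).
\]
Dividing by \(t\), the \(i\)-th term becomes \(\int \tilde g_{i,t}(x)\,v_t(dx)\). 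Here \(\tilde g_{i,t}\) is \(g\) integrated against \(F_t\) in the coordinates before \(i\) and against \(F\) in the coordinates after \(i\). Because \(|g|\le M\) and \(F_t\to F\), we have \(\tilde g_{i,t}\to g_{i,F}\) pointwise, and the bound is uniform.

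Next I move the derivative off the signed measure \(v_t(dx)\) by integrating by parts in \(\R^q\). The identity I use is \(\int \tilde g\,d v=(-1)^q\int v(x-)\,d\tilde g(x)\). It holds because Assumption~\ref{ass:vg}(iv) kills the boundary terms, and the multivariate Stieltjes formula produces one sign flip per coordinate. After this step each term has the form \((-1)^q\int v_t(x-)\,d\tilde g_{i,t}(x)\), which is controlled by \(\|v_t\|_\infty\int|d\tilde g_{i,t}|\).

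I then pass to the limit in two pieces. For the first, \(\int (v_t-v)\,d\tilde g_{i,t}\to0\) because \(\|v_t-v\|_\infty\to0\), provided the total variations \(\int|d\tilde g_{i,t}|\) stay uniformly bounded. For the second, \(\int v\,d\tilde g_{i,t}\to\int v\,dg_{i,F}\); this is where continuity of \(v\) and the "quasi" qualifier, which restricts the perturbations to a class with controlled variation, are used. Summing over \(i\) gives \eqref{eq:vg_derivative}. The delta-method conclusion then follows from Assumption~\ref{ass:vg}(v) and the extended continuous mapping theorem applied to \(v\mapsto\dot V_{g,F}(v)\), which is a continuous linear map on \(\mathbb C^q\) with \(B^\circ\) supported there. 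Since this map is linear, its image of a Gaussian process is Gaussian. The variance is obtained by Fubini, exchanging the expectation with the double Stieltjes integral: \(\E\{\int B^\circ\,dg_{i,F}\int B^\circ\,dg_{j,F}\}=\iint\Gamma(x,y)\,dg_{i,F}(x)\,dg_{j,F}(y)\), and summing over \(i,j\) gives the stated formula.

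The main obstacle is the uniform bound on the variations of \(\tilde g_{i,t}\) along the perturbation path, together with justifying the integration by parts for the mixed measures. Assumption~\ref{ass:vg}(iii) only controls the variation at \(F\) itself. I would handle this by restricting to perturbations in the quasi-Hadamard tangent class, where \(F_t\) has uniformly controlled variation, and using bounded convergence. As an alternative, I would check the empirical path \(F_n\) directly via the Hoeffding decomposition of the bounded V-statistic, whose remainder is \(O_p(n^{-1})\).
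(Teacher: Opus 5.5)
There is a genuine gap, at the point you flagged yourself, and the remedy you suggest does not reach it. Your telescoping keeps $F_t$ in the coordinates $\ell<i$, so the integrand $\tilde g_{i,t}$ moves with $t$. Integrating by parts against it then requires $\tilde g_{i,t}$ to be of bounded variation with vanishing boundary terms, and also $\sup_t\int|d\tilde g_{i,t}|<\infty$. Assumption~\ref{ass:vg}(iii)--(iv) provide this only for $g_{i,F}$, and it can genuinely fail. Take $m=2$, $q=1$, $F$ symmetric and continuous, and $g(x_1,x_2)=\sgn(x_1)h(x_2)$ with $h$ bounded but of unbounded variation (e.g.\ $h(x)=\sin(1/x)$). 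Then $g_{1,F}=\sgn(\cdot)\int h\,dF$ and $g_{2,F}=h\int\sgn\,dF\equiv0$ satisfy (iii). However, $\tilde g_{2,t}=h\int\sgn\,dF_t$ has infinite variation whenever $F_t$ is asymmetric, which is the case for generic perturbations and for empirical distributions. So the integration by parts, and the bound $\|v_t\|_\infty\int|d\tilde g_{2,t}|$ you rely on, break down. Controlling the perturbations does not help: the infinite variation comes from $g$ in its $i$th argument, and $F_t$, being a distribution function, has total variation one anyway. You would need an extra hypothesis, such as uniformly bounded variation of the sections of $g$. Likewise, the pointwise convergence $\tilde g_{i,t}\to g_{i,F}$ does not follow from $|g|\le M$ and sup-norm convergence of $F_t$ alone.

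The missing idea is the one the paper uses. Expand all $m$ factors of $\prod_\ell(F+h_nv_n)(dx_\ell)$, so that the first-order term in coordinate $i$ is $\int g_{i,F}\,dv_n$ with the \emph{fixed} function $g_{i,F}$. Every term in which two or more coordinates carry $v_n$ goes into $h_n^{r-1}Q_{n,r}$, which is bounded crudely through $|g|\le M$ and the total variation of $v_n$, with no integration by parts. In your notation this is the split $\tilde g_{i,t}=g_{i,F}+(\tilde g_{i,t}-g_{i,F})$, where $\|\tilde g_{i,t}-g_{i,F}\|_\infty\le (i-1)Mt\|v_t\|_{\mathrm{TV}}$. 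The second piece therefore contributes at most $(i-1)Mt\|v_t\|_{\mathrm{TV}}^2$. Integration by parts is then applied only to $g_{i,F}$, where (iii)--(iv) are available. The estimate $|\int(v_t-v)(x-)\,dg_{i,F}(x)|\le\|v_t-v\|_\infty\int|dg_{i,F}|$ closes the first-order term directly, more tidily than the paper's partition-and-tails argument, and your separate limit $\int v\,d\tilde g_{i,t}\to\int v\,dg_{i,F}$ is no longer needed. Be aware that the remainder step, in the paper and in this repair alike, needs $t\|v_t\|_{\mathrm{TV}}^2\to0$. The paper simply asserts bounded total variation along convergent tangent sequences. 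Sup-norm convergence does not give this, and it fails for $v_n=\sqrt n(F_n-F)$, whose total variation is $2\sqrt n$ when $F$ is continuous. For the empirical path, your Hoeffding fallback is therefore the sound way to obtain the normal limit, with variance $\var\{\sum_i g_{i,F}(X)\}$ for iid data, although it does not prove the differentiability assertion itself.
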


\begin{proof}
Let \(h_n\to0\), let \(v_n\to v\) in the tangent norm, and put \(F_n^*=F+h_nv_n\).  We must show that
\[
\frac{V_g(F_n^*)-V_g(F)}{h_n}\to \dot V_{g,F}(v).
\]
Expanding the product measure in \eqref{eq:Vg} gives
\begin{align}\label{eq:product_expansion}
\frac{V_g(F_n^*)-V_g(F)}{h_n}
&=\sum_{i=1}^m
\int\cdots\int g(x_1,\ldots,x_m)
\prod_{\ell<i}F(dx_\ell)\,v_n(dx_i)\prod_{\ell>i}F(dx_\ell) \\
&\quad +\sum_{r=2}^m h_n^{r-1} Q_{n,r},\nonumber
\end{align}
where \(Q_{n,r}\) is a finite sum of integrals in which \(r\) coordinates are integrated with respect to \(v_n\) and the remaining coordinates with respect to \(F\).  Because \(|g|\le M\) and the total variations of \(v_n\) are bounded along convergent tangent sequences, there is a constant \(C\), independent of \(n\), such that \(|Q_{n,r}|\le C\) for every \(r=2,\ldots,m\).  Hence the second term in \eqref{eq:product_expansion} is \(o(1)\).

For the first-order terms, Fubini's theorem gives
\[
\int\cdots\int g(x_1,\ldots,x_m)
\prod_{\ell<i}F(dx_\ell)\,v_n(dx_i)\prod_{\ell>i}F(dx_\ell)
=\int g_{i,F}(x)\,v_n(dx).
\]
We next show that \(\int g_{i,F}\,dv_n\to \int g_{i,F}\,dv\).  Since \(g_{i,F}\) has finite total variation on compact sets and \(v_n\to v\) uniformly at continuity points, the convergence follows first on rectangles by a standard partition argument.  The tails are controlled by \(\int|dg_{i,F}|<\infty\) and the bounded total variations of \(v_n\).  Therefore
\[
\int g_{i,F}(x)\,v_n(dx)\to \int g_{i,F}(x)\,v(dx).
\]
Finally, repeated one-dimensional integration by parts in the \(q\) coordinates, with the boundary terms equal to zero by Assumption~\ref{ass:vg}, yields
\[
\int g_{i,F}(x)\,v(dx)=(-1)^q\int v(x-)\,dg_{i,F}(x).
\]
Summing over \(i\) proves the derivative formula \eqref{eq:vg_derivative}.  The weak convergence statement follows from the modified functional delta method applied to \(\sqrt n(F_n-F)\dto B^\circ\).  The variance formula follows by linearity of \(\dot V_{g,F}\) and the covariance structure of the Gaussian process.
\end{proof}

\end{document}